\newcommand\bs{\par\bigskip}
\newcommand\bicond{\leftrightarrow}
\theoremstyle{plain}
\newtheorem{thm}{Theorem}[section]
\newtheorem{prop}[thm]{Proposition}
\newtheorem{lemma}[thm]{Lemma}
\newtheorem{prin}[thm]{Principle}
\newenvironment{thmLabeled}[1]
{\begin{thm}
\label{#1}{\thethm}
\label{page: #1}
}
{\end{thm}}
\newenvironment{prinLabeled}[1]
{\begin{prin}
\label{#1}{\theprin}
\label{page: #1}
}
{\end{prin}}
\newenvironment{lemmaLabeled}[1]
{\begin{lemma}
\label{#1}{\thelemma}
\label{page: #1}
}
{\end{lemma}}
\theoremstyle{definition}
\newtheorem{defn}[thm]{Definition}
\newcommand{\definedterm}[1]{\textit{#1}}
\newenvironment{defLabeled}[1]
{\begin{defn}
\label{#1}{\thedefn}
\label{page: #1}
}
{\end{defn}}
\theoremstyle{remark}
\newtheorem{obs}[thm]{Observation}
\newtheorem{notation}[thm]{Notation}
\begin{document}

\begin{center}
\textbf{Unlimited Category Theories for Mathematics are Inconsistent}

\bigskip
A Discussion of Michael Ernst's  \textit{The Prospects for Unlimited Category Theory:
Doing What Remains to be Done}
\cite{Ernst2015}

\bigskip
A Presentation to the Indiana University, Bloomington, Logic Seminar

\bigskip
William Wheeler
\end{center}

\bigskip
\section{Hopes and Efforts}

\bigskip
Proponents of category theory long hoped to escape the limits of set theory by founding mathematics 
on an unlimited category theory in which large categories, such as the category
\textit{ \bf Grp\/} of all groups, the category \textit{\bf Top\/} of all topological spaces, and
the category \textit{\bf Cat\/} of all categories would be (first-class) entities rather than just classes.
Penelope Maddy's expository paper \textit{What do we want a foundation to do?  Comparing set-theoretic,
category-theoretic, and univalent approaches\/}, \cite{Maddy2019}, pp. 17--22, discusses this in broad
terms.

In particular, in 1966, William Lawvere \cite{Lawvere1966} proposed the Category \textit{\bf Cat\/} of all 
Categories as A Foundation (CCAF) for mathematics.  This approach was improved by Colin McLarty in 1991 
\cite{McLarty1991}. CCAF is an axiomatization of properties of the category of all small 
categories, i.e., categories whose objects and morphisms are sets rather than proper classes.
Previously, in 1964, Lawvere \cite{Lawvere1964} had set forth the 
Elementary Theory of the Category of Sets (ETCS) as a possible foundation for mathematics.  
This also was subsequently improved by Colin McLarty \cite{McLarty1992}.
(For a current exposition of the proposal to found mathematics on ETCS, see \cite{Leinster2014}.)
In 1969, Solomon Feferman \cite{Feferman1969} proposed a system $S$ based 
on Willard van Orman Quine's New Foundations \cite{Quine1937}.  
Saunders Mac Lane \cite{MacLane1998} proposed a ``One Universe'' approach, building on Grothendieck's
Universes.

In addition to proposing the system $S$, Feferman \cite{Feferman1969} laid down three requirements,
which an axiomatic theory $S$ 
of categories should fulfill in order to be an \textit{unlimited category theory for mathematics}:
\begin{enumerate}
\item[(R1)] Form the category of all structures of a given kind, e.g., the category \textit{\bf Grp}
of all groups, \textit{\bf Top} of all topological spaces, and \textit{\bf Cat} of all categories, i.e., 
be unlimited.
\item[(R2)] Form the category $B^A$ of all functors from a category $A$ to a category $B$, where $A$ and
$B$ are any two categories.
\item[(R3)] Establish the existence of the natural numbers $N$, and carry out familiar operations on 
objects $a, b, \dots$ and collections $A, B, \dots$, including the formation of $\{a,b\}$, 
$(a,b)$, $A\cup B$, $A\cap B$, $A - B$, $A \times B$, $B^A$, $\bigcup A$, $\bigcap A$, 
$\prod_{x\in A}B_x$, etc.
\end{enumerate}
In 2013, Feferman \cite{Feferman2013} added as an explicit requirement one that had been an implicit 
requirement:
\begin{enumerate}
\item[(R4)] Establish the consistency of $S$ relative to a currently accepted system of set theory.
\end{enumerate}
He also added to the list of familiar operations in requirement (R3) the passage from a collection 
$A$ and equivalence relation $E$ on A to the canonical operation $p: A \rightarrow A/E$ where $p(x)$ is the 
equivalence class of $x$ with respect to $E$.

However, none of these efforts was ultimately successful.  In 1992, McLarty \cite{McLarty1992F}
showed that any theory
based on Quine's New Foundations, e.g., Feferman's system $S$ in \cite{Feferman1969}
would fail to be closed under Cartesian products.  The category theories CCAF and ETCS are not
unlimited, i.e., fail (R1).  Mac Lane's ``One Universe'' approach is equivalent to the existence
of a regular, inaccessible cardinal and so fails requirement (R4).

Nevertheless, Feferman in 2013  \cite{Feferman2013} still held out hope for an axiomatic
theory of unlimited categories, i.e., one fulfilling requirements (R1)--(R4).

\bs
\section{Denouement}

\bs
But, in 2015, Michael Ernst \cite{Ernst2015} proved: 

\begin{thm}If an axiomatic theory of categories satisfies
requirements (R1)--(R3), then it is inconsistent.
\end{thm}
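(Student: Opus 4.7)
The plan is a Cantor-style diagonalization carried out inside the theory. By requirement (R1), there is a category $\mathbf{Cat}$ of all categories, and since $\mathbf{Cat}$ is itself a category, the class $C$ of its objects is a collection in the theory. Requirement (R3) guarantees that on $C$ the familiar set-theoretic operations are available, including exponentials of collections; in particular one can form a power-collection $\mathcal{P}(C)$, for instance as $2^C$ with $2 = \{0,1\}$ obtained from the natural numbers $N$.

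The next step is to exhibit an injection $\mathcal{P}(C) \hookrightarrow C$. To each subcollection $S \subseteq C$ one would assign the discrete category $D_S$ whose objects are the elements of $S$ and whose only morphisms are identities. This construction is available through (R3): it requires only a collection of objects, identity arrows, and the trivial composition law, all built from the listed operations. Distinct subcollections yield discrete categories with distinct object-collections, so $S \mapsto D_S$ is injective; and by (R1) each $D_S$ is an object of $\mathbf{Cat}$, so the map lands in $C$.

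The final step is to invoke Cantor's theorem internally. The operations listed in (R3)---in particular pairing, union, difference, and the formation of $B^A$---suffice to express the diagonal subcollection $\{x \in C : x \notin f(x)\}$ associated to any candidate injection $f : \mathcal{P}(C) \to C$, so the standard diagonal argument contradicts the injection just constructed. This contradiction yields the inconsistency.

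The main obstacle is not the diagonal step, which is routine once the pieces are in hand, but rather the careful verification that the operations of (R3), when applied to the class $C$ of objects of $\mathbf{Cat}$, behave like a genuine set-theoretic universe within the theory: that $\mathcal{P}(C)$ exists as a collection over which one may quantify, that distinct discrete categories $D_S$ are literally distinct objects of $\mathbf{Cat}$, and that the diagonal comprehension is expressible in the language of the theory rather than only in an external metatheory. Requirement (R2) need not enter the most economical form of the argument, but the injection step can alternatively be packaged as a functor-category construction in the style of $B^A$, making the role of (R2) explicit and aligning the proof with the categorical idiom.
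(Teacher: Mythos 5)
There is a genuine gap, and it sits exactly at the step you call routine. Nothing in (R1)--(R3) provides the separation/comprehension principle needed to form your diagonal subcollection. Requirement (R3) lists specific closure operations --- pairing, union, intersection, difference, product, exponential, indexed product --- and these do not generate arbitrary definable subcollections such as $\{x \in C : x \notin f(x)\}$ (note also that this particular diagonal is the one attached to a surjection $C \to \mathcal{P}(C)$, not to your injection $\mathcal{P}(C) \hookrightarrow C$, for which you would need the collection $\{f(S) : S \subseteq C,\ f(S) \notin S\}$, requiring even more comprehension). This is not a deferrable ``verification'': it is the point on which the whole problem turned. Feferman's proposed system $S$ is built over Quine's New Foundations, in which there is a set of all sets and $\mathcal{P}(V) = V$, yet Cantor's naive argument yields no contradiction there because the unstratified diagonal comprehension is not available. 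If the naive Cantor argument went through from (R1) and (R3) alone, the inconsistency of unlimited category theory would have been immediate and there would have been nothing for Ernst to settle in 2015.

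The paper's proof avoids this by replacing the set-level diagonal with Lawvere's fixed-point theorem, which internalizes the diagonal argument using only cartesian closed structure, i.e., only the exponentials guaranteed by (R2)/(R3) --- so (R2), which you set aside as optional, is in fact essential. Concretely, the theory forms the category \textit{\bf RGraph} of reflexive directed graphs (by (R1)), observes that this category is itself an rgraph $U$, and forms the exponential $K_2^U$. Since $K_2$ admits an endomorphism with no fixed global element (swapping its two vertices), Lawvere's theorem shows that no morphism $U \rightarrow K_2^U$ is onto for global elements; no separation axiom is invoked anywhere. Unlimitedness is then used on the other side: to each global element of $K_2^U$ one assigns a distinct tournament subrgraph of $K_2^U$, and since every rgraph is itself a vertex of $U$, this injective assignment can be turned into a morphism $F: U \rightarrow K_2^U$ that is onto for global elements, giving the contradiction. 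Your injection via discrete categories plays the role of Ernst's tournament lemma, but without a comprehension-free diagonal on the other side (Lawvere's theorem applied inside the topos), the argument does not close.
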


His proof is an analogue, for a particular category $U$, of the proof, using Cantor's Diagonal Method, 
that there is no set $\mathcal{S}$ of all sets.

\bs
\subsection{The situation with sets.}

\strut
\bs
\begin{thmLabeled}{noSetofAllSets} (ZF) There is no set $\mathcal{S}$ of all sets.
\end{thmLabeled}

\begin{proof}
Suppose there were a set $\mathcal{S}$ of all sets.  Then, by the power set axiom, there would be
its exponential $2^{\mathcal{S}}$, i.e., its power set viewed as the collection of characteristic 
functions from $\mathcal{S}$ into the set $2 = \{0 = \emptyset,1 = \{\emptyset\}\}$.
For each set $C$, its characteristic function is $$\chi_C(x)=
\begin{cases}
1 &\text{if } x \in C\\
0 &\text{if } x \notin C\\
\end{cases}\quad .$$
\begin{enumerate}
\item [Step 1:] (Cantor's diagonal method) 
Suppose $f: \mathcal{S} \rightarrow 2^{\mathcal{S}}$. Let $A=\{x\in \mathcal{S} |
(f(x))(x)=0\}$.
By separation, $A$ is a set, so $A\in \mathcal{S}$.  Suppose there were a set $B$
such that $f(B) = \chi_A$.  Then  
$$\chi_A(B) = 1 \bicond B \in A \bicond (f(B))(B) =0 \bicond \chi_A(B) = (\chi_A)(B) = (f(B))(B) = 0 \ ,$$
where the last equation holds because $f(B)=\chi_A$.
But $\chi_A(B) = 1 \bicond \chi_A(B) = 0\neq 1$ is impossible.  
So there is no set $B$ such that $f(B) = \chi_A$.  
Thus, $f$ is not surjective, i.e., not onto.
\item [Step 2:]  
There is a mapping $f:\mathcal{S} \rightarrow 2^{\mathcal{S}}$ that is a surjection, i.e., onto.
Specifically, define $F:\mathcal{S} -> 2^{\mathcal{S}}$ by 
$F(x) = \chi_x$.
Because, for each $g \in 2^{\mathcal{S}}$, 
$g = \chi_{\{x: g(x) = 1\}}$ and $\{x: g(x) = 1\}$ is a set and so is a member of $\mathcal{S}$,
$$F(\{x: g(x)=1\}) = \chi_{\{x:g(x) =1\}} = g\, .$$
Thus, $F$ is a surjection, i.e., is onto.
\end{enumerate}
Steps 1 and 2 are mutually contradictory.  Therefore, there cannot be a set $\mathcal{S}$ of all sets.
\end{proof}

\bs
\subsection {The category \textit{\bf RGraph} of reflexive, directed graphs}

\strut
\bs
The category to which Ernst applies the preceding type of argument is the category 
\textit{\bf RGraph} of reflexive, directed graphs.  This category is discussed in \cite{Brown2008}.

\begin{defLabeled}{rgraph} A reflexive, directed graph (rgraph) consists of a collection of vertices
and a collection of (directed) edges such 
\begin{enumerate} 
\item each edge has a unique source vertex and a unique target vertex, and
\item each vertex $v$ has a distinguished edge $l_v$ such that the source and target of $l_v$
are both $v$.
\end{enumerate}
\end{defLabeled}

\begin{defLabeled}{rgraphmorphism} An rgraph morphism $M$ from an rgraph $G$ to an rgraph $H$ is a mapping
that takes the vertices of $G$ to vertices of $H$ and the edges of $G$ to edges of $H$ such that
\begin{enumerate}
\item for each edge $e$ of $G$ with source vertex $u$ and target vertex $v$, $M(e)$ is an edge of $H$ 
with source vertex $M(u)$ and target vertex $M(v)$, and
\item for each vertex $v$ of $G$, $M(l_v) = l_{M(v)}$.
\end{enumerate}
\end{defLabeled}

\begin{prop} The collections of rgraphs and rgraph morphisms is a category \textit{\bf RGraph} 
with the identity rgraph morphisms as the identity arrows.
\end{prop}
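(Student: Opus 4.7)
The plan is to verify the three ingredients of a category in turn: composition, associativity, and identities. First I would define composition of rgraph morphisms. Given $M : G \to H$ and $N : H \to K$, the candidate composite $N \circ M$ sends a vertex $v$ of $G$ to $N(M(v))$ and an edge $e$ of $G$ to $N(M(e))$; this is well defined as a mapping on underlying vertex and edge collections because $M$ and $N$ are, and we just compose them as such.

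The first real verification is that $N \circ M$ is again an rgraph morphism. For an edge $e$ of $G$ with source $u$ and target $v$, applying the edge condition for $M$ gives that $M(e)$ has source $M(u)$ and target $M(v)$; applying it again for $N$ yields that $N(M(e))$ has source $N(M(u))$ and target $N(M(v))$, which is exactly what is required of $(N \circ M)(e)$. For the loop condition, $(N \circ M)(l_v) = N(M(l_v)) = N(l_{M(v)}) = l_{N(M(v))} = l_{(N \circ M)(v)}$, using property (2) of \ref{rgraphmorphism} twice. Associativity of composition then follows immediately from the associativity of ordinary composition of the underlying maps on vertices and on edges, since the rgraph-morphism conditions are preserved and there are no additional data to track.

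Next I would handle the identities. For each rgraph $G$, take the identity mapping $\mathrm{id}_G$ that is the identity on vertices and the identity on edges. Checking it is an rgraph morphism is trivial: the source and target of $\mathrm{id}_G(e) = e$ are $u = \mathrm{id}_G(u)$ and $v = \mathrm{id}_G(v)$, and $\mathrm{id}_G(l_v) = l_v = l_{\mathrm{id}_G(v)}$. The identity laws $M \circ \mathrm{id}_G = M$ and $\mathrm{id}_H \circ M = M$ for $M : G \to H$ hold because they hold for the underlying vertex and edge mappings.

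The only mild obstacle is a size/foundational nicety: one must make sure that the ``collections'' of rgraphs and rgraph morphisms are of the sort the ambient theory admits as a category, and that for each pair $G,H$ the morphisms from $G$ to $H$ form a legitimate hom-collection. In the set-theoretic context of \ref{noSetofAllSets} and in the categorical setting that Ernst's theorem will concern, this is unproblematic, so the proposition holds.
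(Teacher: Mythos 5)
Your proposal is correct and follows essentially the same route as the paper, which simply lists the three points (well-defined domains and codomains, closure of composition, identity morphisms) that you verify explicitly; your version just fills in the routine checks of the two rgraph-morphism conditions and the associativity and identity laws. No substantive difference in approach.
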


\begin{proof}
\begin{enumerate}
\item Each rgraph morphisms has a unique domain rgraph and a unique range rgraph.
\item Compositions of rgraph morphism are rgraph morphisms.
\item For each rgraph G, the identity rmorphism $1_G$ that maps each vertex of G to itself and each 
edge of $G$ to itself has the requisite categorical properties.
\end{enumerate}
\end{proof}

\begin{prop} The category \textit{\bf RGraph} is itself an rgraph with its objects as 
the vertices, its morphisms as the edges, and the identity rgraph morphisms as the distinguished edges.
\end{prop}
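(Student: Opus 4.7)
The plan is simply to verify that the structure described on \textbf{RGraph}---objects as vertices, morphisms as edges, and identity morphisms as distinguished edges---satisfies the two clauses of Definition \ref{rgraph}. Both facts needed have essentially been recorded in the previous proposition, so the proof reduces to observing that the data matches the defining conditions.

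First, I would address clause (1) of Definition \ref{rgraph}, the requirement that each edge have a unique source vertex and a unique target vertex. This is immediate from item (1) of the preceding proposition, which states that each rgraph morphism has a unique domain rgraph and a unique range rgraph; interpreting the domain as the source vertex and the codomain as the target vertex, clause (1) is satisfied.

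Next, I would verify clause (2), that each vertex carries a distinguished loop edge. Given any object $G$ of \textbf{RGraph}, the candidate distinguished edge is the identity morphism $1_G$, whose domain and codomain are both $G$ by item (3) of the previous proposition. Hence setting $l_G \coloneqq 1_G$ exhibits an edge whose source and target are both $G$, as required.

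I do not anticipate any real obstacle: the proof is purely a matter of matching the two clauses of Definition \ref{rgraph} against the two relevant items from the prior proposition, and the only thing one might pause over is the trivially harmless identification ``domain/codomain of a morphism'' with ``source/target of an edge'', which is forced by the stated interpretation. The point of the proposition is less the verification itself than the observation that \textbf{RGraph} is an object of its own underlying category of rgraphs, which is the structural feature Ernst's diagonal argument will later exploit.
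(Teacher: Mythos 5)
Your proof is correct and matches the intended argument: the paper itself states this proposition without proof, treating exactly the verification you give (unique domain/codomain for clause (1), identity morphisms as the distinguished loops for clause (2)) as immediate from the preceding proposition. Nothing further is needed.
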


\bs
\subsection{Categorical Properties of the Category \textit{\bf RGraph}} 

\strut
\bs
The first issue is to identify the vertices and the edges of an rgraph categorically.

\begin{lemma} \textit{\bf RGraph} has a terminal object $1$, which is the rgraph whose vertex
is (the symbol) $!_1$ and whose only edge is the distinguished edge with source vertex $!_1$ and target
vertex $!_1$.
\end{lemma}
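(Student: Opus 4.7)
The plan is to verify two things in sequence: first, that the described object $1$ really is an rgraph (in the sense of Definition~\ref{rgraph}); second, that for every rgraph $G$ there is exactly one rgraph morphism $G \to 1$. Since these are the defining properties of a terminal object, this will complete the proof.

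For the first point, I would simply observe that $1$ has a single vertex $!_1$ and a single edge; that edge has $!_1$ as its unique source and $!_1$ as its unique target (so clause (1) of Definition~\ref{rgraph} holds vacuously apart from this single edge); and that the edge is explicitly named as the distinguished edge $l_{!_1}$ for the sole vertex $!_1$, so clause (2) holds.

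For the second point, I would first argue uniqueness and then existence, because in this case uniqueness forces the definition of the map. Let $G$ be any rgraph and suppose $M: G \to 1$ is an rgraph morphism. Then for every vertex $v$ of $G$, $M(v)$ is a vertex of $1$, and $!_1$ is the only such vertex, so $M(v) = !_1$ is forced. Similarly, for every edge $e$ of $G$, $M(e)$ is an edge of $1$, and $l_{!_1}$ is the only such edge, so $M(e) = l_{!_1}$ is forced. This shows that there can be at most one morphism $G \to 1$. For existence, I would then take the map $M$ defined by these forced assignments and verify that it satisfies the two conditions of Definition~\ref{rgraphmorphism}: condition (1) holds because the unique edge $l_{!_1} = M(e)$ has source $!_1 = M(u)$ and target $!_1 = M(v)$ for any source $u$ and target $v$ of $e$ in $G$; condition (2) holds because $M(l_v) = l_{!_1} = l_{M(v)}$ for every vertex $v$ of $G$.

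I do not expect a real obstacle here — the lemma is essentially a bookkeeping verification typical of first steps in a categorical setup — but the one place to be careful is to confirm that the uniquely-forced ``constant'' map actually does respect the source/target and distinguished-edge structure, which it does precisely because $1$ has only one vertex and only one edge.
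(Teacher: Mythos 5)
Your proof is correct and follows essentially the same route as the paper: exhibit the constant map sending every vertex to $!_1$ and every edge to the unique edge of $1$, and note it is the only possible rgraph morphism. The paper simply asserts this in one sentence, while you additionally spell out the forced-uniqueness and the verification of Definition~\ref{rgraphmorphism}, which is a harmless elaboration of the same argument.
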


\begin{proof} For any rgraph $G$, the unique rgraph morphism from $G$ onto $1$ is the mapping that
takes every vertex of $G$ to $!_1$ and every edge of G to the edge of $1$ whose source and target
vertices are $!_1$.
\end{proof}

\begin{notation} The unique rgraph morphism from an rgraph $G$ to the terminal object $1$ will
be denoted at $!_G$.
\end{notation}

For each vertex $v$ of an rgraph $G$, there is a unique rgraph morphism from the terminal object
$1$ to $G$ that maps the vertex $!_1$ to $V$ and the unique edge of $1$ to the distinguished edge $l_v$
 of $G$ with source and target vertex $v$.  Conversely, each rgraph morphism from the terminal object
$1$ to $G$ has a unique vertex $v$ as the image of $!_1$ and the distinguished edge $l_v$ as the image
of the unique edge in $1$.

Accordingly, we can identify the vertices of $G$ with the rgraph morphisms of the terminal object $1$
into $G$.

In category theory, the morphisms from a terminal object of the category
are referred to as the \definedterm{global elements} of the category.

\begin{defLabeled}{E}Let $E$ be the rgraph with vertices $\mathcal{s}$ and $\mathcal{t}$, distinguished edges $l_{\mathcal{s}}$, whose source and target vertices are $\mathcal{s}$, and $l_{\mathcal{t}}$, whose source and target vertices are $\mathcal{t}$, and an edge $1_E$ whose source vertex is $\mathcal{s}$ 
and whose target vertex is $\mathcal{t}$.
\end{defLabeled}

For each edge $e$ of an rgraph $G$, there is a unique rgraph morphism from $E$ to $G$ that maps the edge $1_E$ to e.  Conversely, each rgraph morphism from $E$ to $G$ determines a unique edge of $G$, namely,
the image of $1_E$.  

Accordingly, we can identify the edges of an rgraph $G$ with the rgraph morphisms from $E$ to $G$.

\vtop{
\begin{defLabeled}{SpecialGraphs}
\begin{enumerate}
\item An rgraph $G$ is said to be \definedterm{complete} if, for each pair of vertices $u$ and $v$, there is exactly one
edge with source vertex $u$ and target vertex $v$, i.e., or equivalently, there is a unique
rgraph morphism $e$ from the rgraph $E$  to $G$ such such that $e(\mathcal{s}) = u$ and 
$e(\mathcal{t}) = v$.
\item An rgraph $G$ is said to be a \definedterm{tournament} if, for each pair of vertices $u$ and $v$,
there is exactly one edge between them, i.e., there is a unique rgraph morphism $e$ from the rgraph $E$
to $G$ such that $e(\mathcal{s}) = u$ and $e(\mathcal{t})=v$ or that
$e(\mathcal{s})=v$ and $e(\mathcal{t}) =u$.
\item An rgraph $G$ is said to be discrete if the only edges are the distinguished edges for its
vertices.
\end{enumerate}
\end{defLabeled}
}

\begin{prinLabeled}{AxiomOfChoiceForDiscrete}(Axiom of Choice for Discrete Graphs)
If $f$ is an rgraph morphism from G into a discrete rgraph $D$, then there is an
rgraph morphism from $D$ into $G$ such that $f \circ h \circ f = f$.
\end{prinLabeled}

We shall assume that the Axiom of Choice for Discrete Graphs holds in the category
\textit{\bf RGraph}.

\begin{lemma} The category \textit{\bf RGraph} has products.
\end{lemma}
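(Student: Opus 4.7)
The plan is to construct the product of two rgraphs $G$ and $H$ componentwise, in the spirit of the usual Cartesian product construction for set-based structures, and then verify the universal property. The product rgraph $G \times H$ will have as its vertex collection the ``Cartesian product'' of the vertex collections of $G$ and $H$, i.e., ordered pairs $(u,v)$ with $u$ a vertex of $G$ and $v$ a vertex of $H$, and as its edge collection the ordered pairs $(e,f)$ with $e$ an edge of $G$ and $f$ an edge of $H$. The source of $(e,f)$ will be declared to be $(\text{source}(e), \text{source}(f))$ and the target analogously; the distinguished edge at a vertex $(u,v)$ will be $(l_u, l_v)$. The projections $\pi_G : G \times H \to G$ and $\pi_H : G \times H \to H$ are the obvious coordinate projections on both vertices and edges.

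First I would check that $G \times H$ as just defined satisfies Definition~\ref{rgraph}: each pair $(e,f)$ has a uniquely determined pair of source and target vertices by the first coordinate's structure in $G$ and the second's in $H$, and for each vertex $(u,v)$ the edge $(l_u, l_v)$ is the unique distinguished edge with both source and target $(u,v)$. Next I would verify that $\pi_G$ and $\pi_H$ are rgraph morphisms in the sense of Definition~\ref{rgraphmorphism}: preservation of source/target is componentwise by construction, and preservation of distinguished edges follows because $\pi_G(l_{(u,v)}) = \pi_G(l_u, l_v) = l_u = l_{\pi_G(u,v)}$, and symmetrically for $\pi_H$.

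For the universal property, given any rgraph $C$ with rgraph morphisms $\varphi : C \to G$ and $\psi : C \to H$, I would define the pairing $\langle \varphi, \psi \rangle : C \to G \times H$ by $\langle \varphi, \psi \rangle(x) = (\varphi(x), \psi(x))$ for each vertex or edge $x$ of $C$. That this is an rgraph morphism follows directly from the fact that $\varphi$ and $\psi$ each preserve source, target, and distinguished edges, combined with the componentwise definition of these data in $G \times H$. The equations $\pi_G \circ \langle \varphi, \psi \rangle = \varphi$ and $\pi_H \circ \langle \varphi, \psi \rangle = \psi$ are immediate, and uniqueness follows because any rgraph morphism $\mu : C \to G \times H$ satisfying these equations must, on each vertex or edge $x$, send $x$ to a pair whose first coordinate is $\varphi(x)$ and whose second is $\psi(x)$, hence must equal $\langle \varphi, \psi \rangle$.

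The proof is essentially bookkeeping and presents no serious obstacle; the only mildly subtle point is remembering to verify the preservation of the \emph{distinguished} edges in addition to the source/target data, since it is this extra structure that distinguishes rgraphs from ordinary directed graphs. If one wants not merely binary but arbitrary (including large) products indexed by a collection $I$ of rgraphs, the same construction generalizes by replacing ordered pairs with $I$-indexed tuples; this will matter later when the argument against unlimited category theory requires products indexed by the collection of all rgraph morphisms into a fixed object.
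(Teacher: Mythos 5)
Your componentwise construction (pairs of vertices, pairs of edges, sources, targets, and distinguished loops taken coordinatewise) together with the verification of the universal property is correct, and it is the standard argument; the paper itself states this lemma without proof, so your write-up supplies exactly what the paper takes for granted. No gaps to report.
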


\begin{lemma} The category \textit{\bf RGraph} has exponentials.
\end{lemma}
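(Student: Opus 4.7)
The plan is to construct, for each pair of rgraphs $G$ and $H$, an rgraph $H^G$ together with an evaluation morphism $\mathrm{ev} : H^G \times G \to H$ so that for every rgraph $K$ and every morphism $\phi : K \times G \to H$ there is a unique morphism $\hat\phi : K \to H^G$ with $\mathrm{ev} \circ (\hat\phi \times 1_G) = \phi$. Since the preceding sections identify vertices of any rgraph $X$ with rgraph morphisms $1 \to X$ and edges with rgraph morphisms $E \to X$, the shape of the construction is forced by the desired natural bijections $\mathrm{Hom}(1,H^G) \cong \mathrm{Hom}(G,H)$ and $\mathrm{Hom}(E,H^G) \cong \mathrm{Hom}(E \times G, H)$.

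Accordingly, I take the vertices of $H^G$ to be the rgraph morphisms $f : G \to H$, and the edges of $H^G$ to be the rgraph morphisms $\alpha : E \times G \to H$. The source of such an edge $\alpha$ is obtained by precomposing with the inclusion $G \to E \times G$ sending $v \mapsto (\mathcal{s}, v)$ and $e \mapsto (l_{\mathcal{s}}, e)$, and its target by the analogous inclusion along $\mathcal{t}$. For each vertex $f : G \to H$, the distinguished edge $l_f$ is the composite $E \times G \xrightarrow{\pi_G} G \xrightarrow{f} H$, whose source and target are both $f$. The evaluation morphism sends a vertex $(f, v)$ to $f(v)$, and an edge $(\alpha, e)$ of $H^G \times G$, where $e$ corresponds to a morphism $\hat e : E \to G$, to the edge of $H$ represented by $\alpha \circ (1_E, \hat e) : E \to H$.

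Given a morphism $\phi : K \times G \to H$, I define the transpose $\hat\phi : K \to H^G$ by sending a vertex $k$ of $K$ to the rgraph morphism $g \mapsto \phi(k, g)$, and sending an edge $e$ of $K$, corresponding to $\hat e : E \to K$, to the edge $\phi \circ (\hat e \times 1_G) : E \times G \to H$. I then verify that $\hat\phi$ preserves source, target, and distinguished edges, that the equation $\mathrm{ev} \circ (\hat\phi \times 1_G) = \phi$ holds on both vertices and edges, and that uniqueness follows because a morphism $K \to H^G$ is determined on vertices by its evaluation at each vertex of $G$ and on edges by its evaluation against the generic edge $1_E$ of $E$.

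The main obstacle is not any single step but the bookkeeping. The product rgraph $E \times G$ has three families of non-distinguished edges, namely $(l_{\mathcal{s}}, e)$, $(l_{\mathcal{t}}, e)$, and $(1_E, e)$ for edges $e$ of $G$, together with the distinguished edges $(l_{\mathcal{s}}, l_v)$ and $(l_{\mathcal{t}}, l_v)$ for each vertex $v$ of $G$; so every claim that a certain assignment is actually an rgraph morphism expands into a short case analysis across these families. One must be especially careful that the distinguished edge $l_k$ at a vertex $k$ of $K$ is carried by the transpose to the distinguished edge $l_{\hat\phi(k)}$ of $H^G$, which amounts to observing that $\phi \circ (l_k \times 1_G)$ factors through the projection $E \times G \to G$. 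Once those compatibilities are tabulated, the universal property is a direct calculation.
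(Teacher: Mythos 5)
Your construction is correct, and it is the standard one: the paper states this lemma without proof, implicitly relying on the fact that \textit{\bf RGraph} is a presheaf category (presheaves on the one--edge reflexive shape, with the terminal rgraph $1$ and the rgraph $E$ as the two representables), and your $H^G$ --- vertices the morphisms $G \to H$, edges the morphisms $E \times G \to H$, source/target by precomposition with the two inclusions $G \to E\times G$, distinguished edge at $f$ given by $f\circ\pi_G$ --- is exactly the presheaf exponential $\mathrm{Hom}(1\times G,H)$, $\mathrm{Hom}(E\times G,H)$ that such a proof would produce, with the same evaluation and transposition. One small point to tighten when you write out uniqueness: a vertex $\psi(k)$ of $H^G$ is a morphism $G\to H$ and is \emph{not} determined by its values on vertices of $G$ alone (parallel edges exist), so you should say that its values on an edge $g$ of $G$ are forced by the required equation applied to the edge $(l_k,g)$ of $K\times G$, and similarly that the values of $\psi(e)$ on the edges $(l_{\mathcal{s}},g)$ and $(l_{\mathcal{t}},g)$ of $E\times G$ are forced by source/target compatibility with $\psi(s(e))$ and $\psi(t(e))$, not only by evaluation against $(1_E,g)$. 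With that bookkeeping made explicit, the verification goes through as you describe.
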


\begin{defLabeled}{truthvaluegraph} The \definedterm{truth value graph} for the category \textit{\bf RGraph}
is the graph with two vertices \textit{true} and \textit{false}, the distinguished edge $l_{\textit{true}}$
whose source and target vertices are \textit{true}, the distinguished edge
$l_{\textit{false}}$ whose source and target vertices are \textit{false}, another edge whose source and
target vertices are \textit{true}, and an edge whose source is \textit{true} and whose target is 
\textit{false}, and an edge whose source is \textit{false} and whose target is \textit{true}.
\end{defLabeled}

\begin{lemma} The aforedefined truth value graph is a subobject classifier in the category \textit{\bf 
Rgraph}.
\end{lemma}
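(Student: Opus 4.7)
The plan is to verify the universal property of a subobject classifier in \textbf{RGraph}: we must exhibit the global element $\textit{true}: 1 \to \Omega$ (sending the unique vertex of $1$ to the vertex \textit{true} and the unique edge of $1$ to $l_{\textit{true}}$), and show that every monomorphism $m: U \hookrightarrow G$ determines a unique characteristic morphism $\chi_m: G \to \Omega$ whose pullback along $\textit{true}$ recovers $m$. First I would note that the monomorphisms in \textbf{RGraph} are precisely the morphisms injective on both vertices and edges, so that any subobject may be presented as a sub-rgraph of $G$: a subset $V'$ of its vertices together with a subset $F'$ of its edges whose endpoints lie in $V'$ and which contains $l_v$ for every $v \in V'$.

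I would then define $\chi_m: G \to \Omega$ on vertices by $\chi_m(v) = \textit{true}$ iff $v \in V'$, and on an edge $e$ with source $u$ and target $v$ by the cases:
\begin{enumerate}
\item if $u,v \in V'$ and $e \in F'$, send $e$ to $l_{\textit{true}}$;
\item if $u,v \in V'$ and $e \notin F'$, send $e$ to the additional loop at \textit{true};
\item if $u \in V'$ and $v \notin V'$, send $e$ to the edge from \textit{true} to \textit{false};
\item if $u \notin V'$ and $v \in V'$, send $e$ to the edge from \textit{false} to \textit{true};
\item if $u,v \notin V'$, send $e$ to $l_{\textit{false}}$.
\end{enumerate}
A case-by-case check confirms that $\chi_m$ preserves sources and targets and sends each $l_v$ to $l_{\chi_m(v)}$. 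For the pullback property, I would invoke the fact that limits in \textbf{RGraph} are computed componentwise on vertices and edges: the pullback of $\textit{true}: 1 \to \Omega$ along $\chi_m$ has as vertices those $v$ with $\chi_m(v) = \textit{true}$, namely $V'$, and as edges those $e$ with $\chi_m(e) = l_{\textit{true}}$, namely $F'$, recovering $U$.

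The main work, and the step I expect to be the principal obstacle, is uniqueness, which is also what dictates the precise shape of $\Omega$. Suppose $\chi': G \to \Omega$ fits into the same pullback square. On vertices, the pullback condition forces $\chi'(v) = \textit{true}$ iff $v \in V'$, so vertices outside $V'$ must go to \textit{false}. For an edge with exactly one endpoint outside $V'$, the source and target discipline leaves a single possible image in $\Omega$, which is why $\Omega$ must carry exactly one edge from \textit{true} to \textit{false} and exactly one from \textit{false} to \textit{true}. For an edge $e$ with both endpoints outside $V'$, the only edge of $\Omega$ from \textit{false} to \textit{false} is $l_{\textit{false}}$, so $\chi'(e) = l_{\textit{false}}$ is forced. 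Finally, for an edge $e: u \to v$ with $u,v \in V'$, the pullback condition demands $\chi'(e) = l_{\textit{true}}$ exactly when $e \in F'$; for $e \notin F'$, $\chi'(e)$ must therefore be a loop at \textit{true} distinct from $l_{\textit{true}}$, and this is precisely why $\Omega$ must carry an additional loop at \textit{true}, providing the unique image in that case.
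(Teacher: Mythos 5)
Your proof is correct. Note that the paper itself states this lemma without any proof (the topos structure of \textbf{RGraph} is simply asserted), so there is nothing in the paper to compare against; your direct verification of the universal property is the standard argument, and your five-case formula for $\chi_m$ is exactly the characteristic map one gets from the general sieve description of the subobject classifier in a presheaf topos (reflexive graphs being presheaves on a two-object category, the five sieves on the edge object correspond precisely to the five edges $l_{\textit{true}}$, the extra loop at \textit{true}, the edge \textit{true}$\to$\textit{false}, the edge \textit{false}$\to$\textit{true}, and $l_{\textit{false}}$). Your uniqueness argument is also sound at the one point where care is needed: the pullback of \textit{true} along a competing $\chi'$ has vertex set $\{v : \chi'(v)=\textit{true}\}$ and edge set $\{e : \chi'(e)=l_{\textit{true}}\}$, so agreement of subobjects forces $\chi'$ to classify $V'$ and $F'$ exactly, after which preservation of sources, targets, and distinguished loops leaves a unique admissible image for every remaining edge, because $\Omega$ has exactly one edge of each remaining type. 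The only implicit ingredients are that monomorphisms are the componentwise injections and that pullbacks are computed componentwise, both standard for presheaf categories and consistent with the level of detail the paper itself assumes.
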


\begin{obs} The category \textit{\bf RGraph} is a topos.
\end{obs}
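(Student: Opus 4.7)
The plan is to verify the standard definition of an elementary topos, namely that \textit{\bf RGraph} (i) has all finite limits, (ii) is cartesian closed, and (iii) possesses a subobject classifier. Items (ii) and (iii) are already packaged by the preceding lemmas: exponentials give cartesian closure once products are in hand, and the truth value graph defined above has just been shown to be a subobject classifier. So the only genuine work is to promote the terminal object and binary products to all finite limits.

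First I would recall that in any category, finite limits exist if and only if there is a terminal object and all equalizers (equivalently, a terminal object and all pullbacks). The terminal object $1$ was constructed earlier, and products were handled in the products lemma. It therefore suffices to exhibit equalizers. Given parallel rgraph morphisms $f,g : G \to H$, I would construct the equalizer $K$ as the sub-rgraph of $G$ whose vertices are $\{v \in G : f(v) = g(v)\}$ and whose edges are $\{e \in G : f(e) = g(e)\}$. The reflexivity condition is automatic, since if $v$ is an equalized vertex then $f(l_v)=l_{f(v)}=l_{g(v)}=g(l_v)$, so $l_v$ lies in $K$; source and target of an equalized edge are equalized vertices for the same reason. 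The inclusion $K \hookrightarrow G$ is then readily checked to satisfy the universal property of an equalizer, essentially by the same pointwise argument one uses in \textbf{Set}.

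Having assembled finite limits, cartesian closure, and a subobject classifier, I would conclude by appealing to the definition of an elementary topos; the observation then follows immediately. The whole argument is largely bookkeeping, but the one place where a reader might demand care is the verification that $K$ really is closed under the reflexive edge structure and that its universal property goes through for rgraph morphisms (not merely for underlying vertex-edge maps); this is the only mildly non-trivial step and is what I would expect to be the main obstacle to a fully rigorous write-up, though it is routine.
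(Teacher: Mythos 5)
Your proposal is correct and follows the same route the paper implicitly takes: the Observation is stated without proof, as an immediate consequence of the preceding lemmas giving a terminal object, products, exponentials, and the subobject classifier. Your explicit pointwise construction of equalizers (vertices and edges on which the parallel morphisms agree, with the reflexive edges automatically included) correctly supplies the one ingredient for finite limits that the paper leaves tacit, so the write-up is sound.
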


\bs
\subsection{The Contradiction}

\strut\bs
\begin{defLabeled}{GraphK2}
Define the rgraph $K_2$ to be the complete rgraph with vertices $\mathcal{1}$ and $\mathcal{2}$; 
the distinguished edges $l_{\mathcal{1}}$, whose source and target vertices are $\mathcal{1}$,
and $l_{\mathcal{2}}$, whose source and target vertices are $\mathcal{2}$;  a unique edge
whose source vertex is $\mathcal{1}$ and whose target vertex is $\mathcal{2}$; and a unique
edge whose source vertex is $\mathcal{2}$ and whose target vertex is $\mathcal{1}$.
\end{defLabeled}

Recall that the category \textit{\bf RGraph} considered as an rgraph is denoted by $U$.

The rgraph $K_2^U$ will be the rgraph that is the analogue of $2^{\mathcal{S}}$ in the proof
of Theorem \ref{noSetofAllSets} for a derivation of a contradiction.

\begin{thmLabeled}{Contradiction} The Contradiction:
\begin{enumerate}
\item There is no rgraph morphism $F: U \rightarrow K_2^U$ that is onto for global elements.
\item There is an rgraph morphism $F: U \rightarrow K_2^U$ that is onto for global elements.
\end{enumerate}
\end{thmLabeled}

\begin{proof} (A sketch the proof; )
\begin{enumerate} 
\item [Step 1: ] The proof of the first assertion uses Corollary 1.2 of Lawvere's
1969 paper \textit{Diagonal arguments and cartesian closed categories} \cite{Lawvere1969}:

\begin{thm} (Lawvere) In any topos $\mathcal{C}$, the following holds:  For any 
$\mathcal{C}$-object $B$, if there exists a $\mathcal{C}$-arrow $f:B \rightarrow B$ such that
$f \circ b \neq b$ for all $b: 1 -> B$, then for no $\mathcal{C}$-object $A$ does there exist
a $\mathcal{C}$-arrow $A \rightarrow B^A$ that is onto for global elements.
\end{thm}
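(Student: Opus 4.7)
The plan is to execute Cantor's diagonal argument internally in the cartesian closed structure of $\mathcal{C}$. We are given a fixed-point-free arrow $f: B \to B$ (meaning $f \circ b \neq b$ for every global element $b: 1 \to B$), and we wish to derive a contradiction from the assumption that some $\phi: A \to B^A$ is onto for global elements, i.e., that every $g: 1 \to B^A$ factors as $g = \phi \circ a$ for some $a: 1 \to A$.

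First, using the evaluation arrow $\text{ev}: B^A \times A \to B$, I would form the ``twisted diagonal''
$$q \;=\; f \,\circ\, \text{ev} \,\circ\, \langle \phi,\, 1_A\rangle \;:\; A \longrightarrow B,$$
which in element notation reads $q(a) = f(\phi(a)(a))$ and plays the role of the characteristic function $\chi_A$ in Theorem \ref{noSetofAllSets}. Next I would transpose $q$ across the adjunction $(-)\times A \dashv (-)^A$, identifying $A \cong 1 \times A$, to obtain its name $\lceil q \rceil: 1 \to B^A$, characterized by $\text{ev}\circ (\lceil q\rceil \times 1_A) = q$. By the hypothesis that $\phi$ is onto for global elements, there exists $a_0: 1 \to A$ with $\phi \circ a_0 = \lceil q \rceil$.

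Finally I would compute the composite $q \circ a_0: 1 \to B$ in two ways: unwinding the definition of $q$ and substituting $\phi \circ a_0 = \lceil q\rceil$ via the defining property of the transpose should yield $q \circ a_0 = f \circ q \circ a_0$, exhibiting $q \circ a_0$ as a fixed point of $f$ and contradicting the hypothesis on $f$. In the internal language this is the one-line calculation $q(a_0) = f(\phi(a_0)(a_0)) = f(\lceil q\rceil(a_0)) = f(q(a_0))$.

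The main obstacle is executing the self-referential step purely at the arrow level: one must chase the diagram built from $\Delta_A$ (equivalently, the pairing $\langle \phi, 1_A\rangle$), $\phi$, $\text{ev}$, and $a_0$, and then correctly invoke the universal property of the exponential to recognize the rewritten composite as $f \circ q \circ a_0$. Note that the argument uses only the cartesian closed structure of $\mathcal{C}$ and nowhere invokes the subobject classifier, so Lawvere's theorem really belongs to the theory of cartesian closed categories rather than to topos theory per se; the ``topos'' hypothesis in the statement is convenient but stronger than needed.
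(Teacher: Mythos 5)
Your proof is correct, and it is essentially the argument behind the cited result: the paper itself gives no proof of this theorem, quoting it as Corollary 1.2 of Lawvere's 1969 paper, and your construction of the twisted diagonal $q = f \circ \mathrm{ev} \circ \langle \phi, 1_A\rangle$, its transpose $\lceil q\rceil: 1 \to B^A$, and the fixed point $q \circ a_0$ of $f$ is exactly Lawvere's fixed-point/diagonal argument in contrapositive form. Your closing observation is also accurate: the argument uses only the cartesian closed structure, which is the generality in which Lawvere originally proved it, so the ``topos'' hypothesis here is merely convenient for the application to \textit{\bf RGraph}.
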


Let $p_{K_2} : K_2 \rightarrow K_2$ be the rgraph morphism that permutes the vertices of $K_2$.
Then $p_{K_2}$ satisfies then $p_{K_2} \circ b \neq b$ for any $b: 1 -> K_2$.  By Lawvere's theorem,
there is no \textit{\bf RGraph} morphism $F: U \rightarrow K_2^U$ that is onto for global elements.

\item [Step 2: ]  This step is considerably longer.  Two lemmas are needed; they are proved in the
appendices of Ernst's paper \cite{Ernst2015}.

\begin{lemmaLabeled}{ExponentialCompleteness} (Exponential Completeness) If $G$ is a complete rgraph,
then $G^H$  is complete for any rgraph $H$.
\end{lemmaLabeled}

Because the rgraph $K_2$ is complete, $K_2^U$ is complete, i.e., for each pair of vertices $u$ and $v$
of $K_2^U$, there is a unique edge with source vertex $u$ and target vertex $v$.  Denote this unique edge
by $\prec u, v\succ$.

\begin{lemmaLabeled}{Tournament} (Tournament) For any rgraph $G$, for each vertex x of $K_2^G$, there
is a tournament $Q_x$ that is a subrgraph of $K_2^G$.  Also, for distinct vertices $x$ and $y$, 
$Q_x \neq Q_y$.
\end{lemmaLabeled}

\begin{lemmaLabeled}{StrengthenedTournament}(StrengthenedTournament) For any rgraph $G$, there is a 
mapping $I:K_2^G \rightarrow U$ such that, for each vertex $x$ of $K_2^G$, $I(x)$ is a tournament
that is a subrgraph of $K_2^G$.  Furthermore, for distinct vertices $x$ and $y$, $I(x) \neq I(y)$.
\end{lemmaLabeled}

Let $J$ be such a mapping $I$ for $G=U$, $J:K_2^U \rightarrow U$.

Let $v_{\text{other}}$ be a vertex of $K_2^U$.

Define an \textit{\bf RGraph} morphism $F: U \rightarrow K_2^U$ by 
$$F(x) = 
\begin{cases}
y&\text{if  $x$ is a vertex of $U$ and,  for some vertex $y$ of $K_2^U$, } x = J(y)\\
v_\text{Other}&\text{if $x$ is a vertex of $U$ and there is no vertex $y$ of $K_2^U$ such that } x=J(y)\\
\prec F(u),F(v)\succ&\text{if $e$ is an edge with source vertex $u$ and target vertex $v$}\\
\end{cases}
$$
For each vertex $y$ in $K_2^U$, $F(J(y))=y$.  
Thus, $F$ is onto for global elements.
\end{enumerate}
\end{proof}

\begin{thmLabeled}{NoTheory}
If an axiomatic theory of categories fulfills Feferman's requirements (R1), (R2), and (R3),
then it is inconsistent.
\end{thmLabeled}

\begin{proof}  If an axiomatic theory of categories fulfills Feferman's requirements (R1), (R2), and
(R3), then it can form the category \textit{\bf RGraph} and prove the preceding Theorem \ref{Contradiction},
i.e., it proves $\neg \varphi \wedge \varphi$, where $\varphi$ is the the statement \textit{There is an 
rgraph morphism $F: U \rightarrow K_2^U$ that is onto for global elements}.  Thus, the theory is inconsistent.
\end{proof}

\bs
\section {Going Forward}

\strut\bs

Ernst points out that, prior to his paper, one of the criticisms leveled against various, limited foundations,
in particular, ZFC, was that they did not provide a foundation for unlimited category theories.
But, because of his proof that unlimited category theories are inconsistent, nothing could have
provided a foundation for them.  This cancels this criticism of ZFC and comparable foundations.

Penelope Maddy, in her insightful paper \textit{What do we want a foundation to do} \cite{Maddy2019}
analyzes the various positions regarding the foundations of mathematics
and concludes that there isn't a well-defined notion of what a foundation is but rather 
that there are well-defined notions 
of what a foundation should do.  She believes that the fruitful way to proceed with foundational matters 
is to consider what a foundation should or might do and the benefits it might provide.  For set theory, she
identifies the benefits that it provides as `` a \textbf{Generous Arena} where all of modern mathematics
takes place side-by-side'' with sharing from one part to another, ``a \textbf{Shared Standard} 
of what counts as a legitimate construction or proof'', \textbf{Risk Assessment} of the level
of risk of inconsistency in theories and methods, and a ``\textbf{Meta-mathematical Corral} so that
formal techniques can be applied to all of mathematics at once.'' 
Given Ernst's proof that unlimited category theory
is inconsistent, Maddy identifies the benefit of category theory to be ``\textbf{Essential Guidance}'' 
to ``guide mathematicians toward the important [mathematical] structures and characterize them
strictly in terms of their mathematically essential features.... but only for those branches of 
mathematics of roughly algebraic character.''  In the latter regard, she notes that Saunders Mac Lane
himself (in \cite{MacLane1986}, p. 407) states ``Categories and Functors are everywhere in topology 
and in parts of algebra, but they do not yet relate very well to most of analysis.''
For the more recent \textit{univalent foundations}, Maddy sees the primary benefit it provides as being
\textbf{Proof Checking}.  She views all of these benefits as important benefits which no one ``foundation'' 
could provide.

John Baldwin \cite{Baldwin2021}, building on Maddy's work, proposes the position that, as regards
the \textbf{Essential Guidance} that Category Theory provides, Category Theory should not be compared
to Set Theory as a foundation but rather be compared to Model Theory as a \textbf{scaffold} within
which to do mathematics.  He identifies as other scaffolds Descriptive Set Theory, which organizes
``many subjects that are captured by Polish Spaces'' and ``the Langlands Program that expands on
Weil's Rosetta stone to unify number theory with harmonic analysis (and points in between)''.

\bigskip

\bigskip

\bibliography{Ernst}{}
\bibliographystyle{plain}
\end{document}